\newcommand{\define}{\textbf}
\newcommand{\isom}{\cong}
\renewcommand{\setminus}{\smallsetminus}
\renewcommand{\phi}{\varphi}
\newcommand{\exterior}{\textstyle\bigwedge}
\renewcommand{\tilde}{\widetilde}
\renewcommand{\hat}{\widehat}
\renewcommand{\bar}{\overline}
\newcommand{\C}{\mathbb{C}}
\newcommand{\Z}{\mathbb{Z}}
\renewcommand{\P}{\mathbb{P}}
\renewcommand{\O}{\mathcal{O}}
\newcommand{\lieg}{\mathfrak{g}}
\newcommand{\liep}{\mathfrak{p}}
\newcommand{\lieb}{\mathfrak{b}}
\newcommand{\liet}{\mathfrak{t}}
\newcommand{\<}{\langle}
\renewcommand{\>}{\rangle}
\newcommand{\Fl}{{Fl}}
\newcommand{\Gr}{{Gr}}
\newcommand{\FFl}{\mathbf{Fl}}
\newcommand{\GGr}{\mathbf{Gr}}
\newcommand{\OOmega}{\mathbf{\Omega}}
\DeclareMathOperator{\Sym}{Sym}
\DeclareMathOperator{\rk}{rk}
\DeclareMathOperator{\diag}{diag}
\DeclareMathOperator{\End}{End}
\DeclareMathOperator{\Hom}{Hom}
\newtheorem{theorem}{Theorem}[section]
\newtheorem{lemma}[theorem]{Lemma}
\newtheorem{proposition}[theorem]{Proposition}
\newtheorem{corollary}[theorem]{Corollary}
\theoremstyle{definition}
\newtheorem{definition}[theorem]{Definition}
\newtheorem{remark}[theorem]{Remark}
\begin{document}%%%%%%%%%%%%%%%%%%%%%%%%%%%%%%%%
%%%%%%%%%%%%%%%%%%%%%%%%%%%%%%%%%%%%%%%%%%%%%%%%

\title{Degeneracy of triality-symmetric morphisms}
\author{Dave Anderson}
\address{Department of Mathematics\\University of Michigan\\Ann Arbor, MI 48109}
\email{dandersn@umich.edu}
\keywords{degeneracy locus, triality, octonions, equivariant cohomology}
\date{December 3, 2008}
\thanks{This work was partially supported by an RTG fellowship, NSF Grant 0502170.}

\begin{abstract}
We define a new symmetry for morphisms of vector bundles, called \emph{triality symmetry}, and compute Chern class formulas for the degeneracy loci of such morphisms.  In an appendix, we show how to canonically associate an octonion algebra bundle to any rank $2$ vector bundle.
\end{abstract}

\maketitle

%%%%%%%%%%%%%%%%%%%%%%%%%%%%%%%%%%%%%%%%%%%%%%%%%
\section{Introduction}\label{sec:intro}%%%%%%%%%%
%%%%%%%%%%%%%%%%%%%%%%%%%%%%%%%%%%%%%%%%%%%%%%%%%

Let $\phi:E \to F$ be a morphism of vector bundles on a smooth variety $X$, of respective ranks $m$ and $n$.  The $r$th \emph{degeneracy locus} of $\phi$ is the set of points of $X$ defined by
\begin{eqnarray*}
D_r(\phi) = \{x\in X\,|\, \rk \phi(x) \leq r \},
\end{eqnarray*}
where $\phi(x):E(x)\to F(x)$ is the corresponding linear map in the fibers over $x\in X$.  Such loci are ubiquitous in algebraic geometry: many interesting varieties, from Veronese embeddings of projective spaces to Brill--Noether loci parametrizing special divisors in Jacobians, can be realized as degeneracy loci for appropriate maps of vector bundles.  General geometric information about degeneracy loci is therefore often useful.  In particular, one can ask for Chern class formulas for the cohomology class of $D_r(\phi)$ in $H^*X$ --- what is $[D_r(\phi)]$ as a polynomial in the Chern classes of $E$ and $F$?

When $\phi$ is sufficiently general, so $D_r(\phi)$ has \emph{expected codimension} equal to $(m-r)(n-r)$, the answer is given by the Giambelli--Thom--Porteous determinantal formula.  In two cases of particular interest, Chern class formulas are known for degeneracy loci where $\phi$ is not general in this sense.  Taking $F=E^*$, one has the dual morphism $\phi^*:E^{**} = E \to E^*$.  Call $\phi$ \emph{symmetric} if $\phi^*=\phi$, and \emph{skew-symmetric} if $\phi^*=-\phi$.  The codimension of $D_r(\phi)$ is at most $\binom{m-r+1}{2}$ (in the symmetric case) or $\binom{m-r}{2}$ (in the skew-symmetric case), so such morphisms are never sufficiently general for the Giambelli--Thom--Porteous formula to apply.  Formulas for these loci were given by Harris--Tu \cite{ht} and J\'ozefiak--Lascoux--Pragacz \cite{jlp}.  As explained in \cite{fnr}, these formulas can also be found by computing the equivariant classes of appropriate orbit closures in the $GL(E)$-representations $\Sym^2 E^*$ and $\exterior^2 E^*$, where $E$ is a vector space.  See \cite[Chapter~6]{fp} for more detailed discussions of the formulas.

The primary goal of the present article is to give degeneracy locus formulas for a new class of morphisms, which we call \emph{triality-symmetric} morphisms.  Letting $E$ be a rank $2$ vector bundle, these are maps
\begin{eqnarray*}
\phi: E \to End(E) \oplus E^*
\end{eqnarray*}
possessing a certain symmetry related to the $S_3$ symmetry of the $D_4$ Dynkin diagram.  Specifically, we use the following definition:
\begin{definition} \label{def:triality-symmetry}
A morphism $\phi:E \to End(E)\oplus E^*$ is \define{triality-symmetric} if the corresponding section of $Hom(E,End(E)\oplus E^*)$ lies in
\begin{eqnarray*}
(Sym^3 E^* \otimes \exterior^2 E) \oplus \exterior^2 E^*.
\end{eqnarray*}
That is, $\phi=\phi_1\oplus\phi_2$, with $\phi_1$ defining a symmetric trilinear form $Sym^3 E\to\exterior^2 E$ and $\phi_2$ defining an alternating bilinear form $\exterior^2 E \to \O_X$.
\end{definition}

A few words of motivation are in order concerning this definition.  For simplicity, consider the case where $X$ is a point.  The space of all linear maps $\Hom(E,F)$ is also the tangent space to the Grassmannian $Gr(m,m+n) = Gr(m,E\oplus F) = GL_{m+n}/P$ (for an appropriate maximal parabolic subgroup $P$) at the point corresponding to $E$.  When $F=E^*$, there is a canonical symplectic form $\omega$ on $E\oplus E^*$, defining the Lagrangian Grassmannian $LG(m,2m) \subseteq Gr(m,2m)$, and the space of symmetric morphisms $\Sym^2 E^*$ is naturally identified with the tangent space to $LG(m,2m) = Sp_{2m}/P$ at the point $[E]$.  Moreover, $LG(m,2m)$ is the fixed locus for the involution of $Gr(m,2m)$ which sends a subspace to its orthogonal complement under $\omega$.  The situation is similar for skew-symmetric morphisms, replacing the Lagrangian Grassmannian with the orthogonal Grassmannian $OG(m,2m) = SO_{2m}/P$.

From this point of view, it is natural to expect nice degeneracy locus formulas corresponding to other finite symmetries of homogeneous spaces.  A particularly interesting one is the \emph{triality} action on $OG(2,8)$.  A concise description of this $S_3$ action may be found in \cite[Appendix B]{thesis}; for more details, see \cite{vdbs} or \cite{gari-triality}.  For our purposes, the relevant facts are that the fixed locus is the ``$G_2$ Grassmannian'' $G_2/P$ (for $P$ corresponding to the long root), and the tangent space to $G_2/P$ is naturally identified with $(\Sym^3 E^* \otimes \exterior^2 E) \oplus \exterior^2 E^*$ at the point $[E] \in G_2/P \subseteq OG(2,8)$.  (In \S\ref{sec:triality}, we will explicitly exhibit the $S_3$ action on the tangent space $T_{[E]}OG(2,8) \isom \Hom(E,\End(E)) \oplus \exterior^2 E^*$ fixing $(\Sym^3 E^* \otimes \exterior^2 E) \oplus \exterior^2 E^*$.)  Further motivation comes from the fact that there is a canonical \emph{octonion algebra} structure on $E\oplus End(E) \oplus E^*$, when $E$ is a rank $2$ vector bundle, just as there is a canonical symplectic structure on $E\oplus E^*$.  This is the content of Proposition \ref{prop:bundle-constr}.

Since $E$ is required to have rank $2$, a triality-symmetric morphism may have rank $0$, $1$, or $2$.  Write $D_r(\phi) \subseteq X$ for the locus of points where $\phi$ has rank at most $r$.  For a triality-symmetric morphism $\phi$, define the \define{expected codimension} of $D_r(\phi)$ to be $5$, $3$, or $0$ if $r=0$, $r=1$, or $r=2$, respectively.  With this understood, we may state our main theorem:

\begin{theorem} \label{thm:triality-formula}
Let $c_1,c_2$ be the Chern classes of $E^*$, and let $x_1,x_2$ be Chern roots.  Let $\phi:E\to End(E) \oplus E^*$ be a triality-symmetric morphism.  If $D_r(\phi)$ has expected codimension and $X$ is Cohen--Macaulay, then we have $[D_r(\phi)] = P_r(c_1,c_2)$ in $H^*X$, where
\begin{eqnarray*}
P_2 &=& 1 ,\\
P_1 &=&  3\, c_2\, c_1 = 3 x_1 x_2 (x_1+x_2),\\
P_0 &=& c_2\, c_1 \, (9\,c_2 - 2\,c_1^2) = x_1 x_2 (x_1+x_2) (2x_1 - x_2) (-x_1 + 2x_2).
\end{eqnarray*}
\end{theorem}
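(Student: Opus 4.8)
The plan is to compute $[D_r(\phi)]$ as the equivariant cohomology class of an orbit closure, following the method of \cite{fnr} mentioned above. Let $E$ now be a $2$-dimensional vector space with $T=GL(E)$ acting, and let $V = (\Sym^3 E^* \otimes \exterior^2 E) \oplus \exterior^2 E^*$ be the representation in which a triality-symmetric $\phi$ lives. A general triality-symmetric morphism on $X$ is classified by a $GL(E)$-equivariant map $X \to V$ (after passing to the bundle of frames of $E$), and $D_r(\phi)$ is the preimage of the closure $\bar{\mathcal O}_r \subseteq V$ of the locus of rank-$\leq r$ elements. Under the Cohen--Macaulay and expected-codimension hypotheses, Kempf's argument (as in \cite{fnr, fp}) gives $[D_r(\phi)] = [\bar{\mathcal O}_r]^{GL(E)}$ pulled back along the classifying map, with $c_1,c_2$ the Chern classes of $E^*$ corresponding to the equivariant parameters $x_1,x_2$. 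So the whole problem reduces to identifying the three orbit closures and computing their $GL_2$-equivariant classes.

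The first concrete step is to write out $V$ explicitly: choosing a basis $e_1,e_2$ of $E$, an element of $\Sym^3 E^*$ is a binary cubic form $f = a_0 x^3 + 3a_1 x^2 y + 3 a_2 x y^2 + a_3 y^3$, and $\exterior^2 E^*$ contributes one more coordinate $b$; the torus weights are transparent. I then need to understand, for a triality-symmetric $\phi = \phi_1 \oplus \phi_2$, what $\rk \phi \leq r$ means as a condition on $(f,b)$. The rank-$0$ locus $\bar{\mathcal O}_0$ is simply the origin, so $[\bar{\mathcal O}_0]^{GL_2}$ is the equivariant Euler class of $V$, i.e.\ the product of all its weights — this should directly yield $P_0 = x_1 x_2 (x_1+x_2)(2x_1 - x_2)(-x_1+2x_2)$ once the weights of $\Sym^3 E^* \otimes \exterior^2 E$ and $\exterior^2 E^*$ are listed (five weights, matching expected codimension $5$). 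For $P_1$, I must identify the codimension-$3$ orbit closure: this is where the cubic form $f$ and the scalar $b$ satisfy the degeneracy condition cutting rank to $\leq 1$; geometrically, on the cubic-form factor this is the locus of binary cubics with a repeated root together with a compatibility with $b$, and I expect $\bar{\mathcal O}_1$ to be (a component of) the discriminant-type hypersurface intersected appropriately. Its class is then computed either by resolving the orbit closure (a Kempf collapsing / Bott--Samelson-type resolution) and pushing forward, or by restriction to $T$-fixed points and interpolation, giving $P_1 = 3 c_2 c_1 = 3 x_1 x_2 (x_1+x_2)$.

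The main obstacle will be Step two: correctly identifying the orbit closures $\bar{\mathcal O}_1$ and $\bar{\mathcal O}_0$ inside $V$ and verifying they have the expected codimensions $3$ and $5$ — in particular checking that the rank stratification of triality-symmetric maps really does cut out irreducible subvarieties of those dimensions, so that Kempf's theorem applies and the pullback formula is valid. Once the orbit closures and a $GL_2$-equivariant resolution of each are in hand, the Chern class computation itself is a routine equivariant pushforward (a residue/symmetrization computation in $x_1,x_2$), and I would double-check the answers by the $G_2$-Weyl-group symmetry: $P_0$ should be (up to scalar) the product over the positive roots of $G_2$ not coming from the Levi, which visibly explains the factors $x_1+x_2$, $2x_1-x_2$, $2x_2-x_1$, and $P_1$ should be the corresponding class for the next Schubert-type stratum — this symmetry is a strong consistency check and also suggests the cleanest route to the formulas. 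A secondary technical point is making sure the identification of $c_1,c_2$ as Chern classes of $E^*$ (rather than $E$) is bookkept correctly, since this sign/duality choice is exactly what makes $P_1$ and $P_0$ come out with the stated signs.
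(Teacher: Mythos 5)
Your proposal is the same strategy as the paper's second proof (Section 5): realize $D_r(\phi)$ as the pullback of a $GL_2$-invariant closed subvariety of $U = (\Sym^3 E^*\otimes\exterior^2 E)\oplus\exterior^2 E^*$, compute its equivariant class, and push the answer down the classifying map. Your sanity check for $P_0$ is exactly right: $\bar{O}_5=\{0\}$ has class the Euler class of $U$, i.e. the product of the $T$-weights $-\alpha_2,-\alpha_1-\alpha_2,-2\alpha_1-\alpha_2,-3\alpha_1-\alpha_2,-3\alpha_1-2\alpha_2$ of $\lieg_2/\liep$, and after the sign flip $t_i\mapsto -x_i$ one gets the displayed factorization. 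The cosmetic difference is that the paper works with $P$-orbits in $\lieg_2/\liep$ (which also gives a uniform treatment of the intermediate strata $O_1,O_2$ that are not degeneracy loci), but on the relevant subspace $U'$ the $P$-action factors through $GL_2$, so that distinction does not affect the theorem.

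There is, however, a genuine gap in the step you single out as the main obstacle: the identification of the codimension-$3$ stratum. You suggest that the rank-$\leq 1$ locus is ``the locus of binary cubics with a repeated root together with a compatibility with $b$,'' i.e. something cut out by the discriminant. That cannot be right: the discriminant hypersurface in $U'\cong\Sym^3 E^*\otimes\exterior^2 E$ has codimension $1$ in $U'$, hence codimension $2$ in $U$, and is irreducible (it is the closure of the orbit $O_2$), so neither it nor a component of it can serve as the expected codimension-$3$ locus. The correct identification, which the paper verifies by a direct coordinate computation (Lemma~\ref{lemma:orbit-rank}), is that the triality-symmetric morphisms of rank $\leq 1$ are precisely the cone over the twisted cubic in $\P U'$ — binary cubics with a \emph{triple} root, i.e. perfect cubes — which is the singular locus of the discriminant, not a component of it. This has codimension $2$ in $U'$ and hence codimension $3$ in $U$, and its equivariant class is computed by the classical Salmon--Roberts/Giambelli formula for the twisted cubic, yielding $-3t_1t_2(t_1+t_2)$ and hence $P_1=3c_1c_2$. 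To repair your argument you need to prove the analogue of Lemma~\ref{lemma:orbit-rank}: pick a weight-vector representative of each orbit in $U'$, write out the corresponding $2\times 6$ matrix $A_\phi$, and check directly that representatives of $O_1$ and $O_2$ have rank $2$ while representatives of $O_3$ have rank $1$. Without this check the rank stratification has not actually been matched to the orbit stratification.

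A minor separate caution: restriction to $T$-fixed points will not give you these classes directly (the only fixed point of $T$ on a vector space is the origin, and the orbit stabilizers here are unipotent — see Remark~\ref{rmk:restriction}), so the ``localization and interpolation'' alternative you float does not run as stated; the Kempf-resolution/pushforward route, or the direct hypersurface-degree and Salmon--Roberts computations used in the paper, are the ones that work.
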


We will give two proofs, both involving the simple Lie group of type $G_2$, but using substantially different approaches.  The first relates degeneracy loci for triality-symmetric morphisms to certain Schubert loci in a $G_2$ flag bundle, just as Fulton's generalization of the Harris--Tu formulas relates symmetric morphisms to type $C$ flag bundles \cite{orthosymp}.  One then applies the formulas for $G_2$ Schubert loci developed in \cite{g2chern} to derive the formulas of Theorem \ref{thm:triality-formula}.

The second proof uses equivariant cohomology, in the spirit of \cite{fr} and \cite{fnr} (but see Remark \ref{rmk:restriction}).  More precisely, for $P$ the maximal parabolic subgroup of $G_2$ corresponding to the long root and $E$ a two-dimensional vector space, we consider $(\Sym^3 E^* \otimes \exterior^2 E) \oplus \exterior^2 E^*$ as a $P$-module and compute the equivariant classes of the $P$-orbit closures in this vector space.  Certain of these orbit closures correspond to degeneracy loci, and one can deduce Theorem \ref{thm:triality-formula} from the equivariant formulas.  Along the way, we explicitly identify the $P$-orbit closures in $(\Sym^3 E^* \otimes \exterior^2 E) \oplus \exterior^2 E^*$, and compute all their equivariant classes (Proposition \ref{prop:orbit-desc} and Theorem \ref{thm:orbit-formula}).

Triality symmetry is the $G_2$ case of a general notion of symmetry for morphisms of vector bundles; in fact, two types of symmetry for morphisms can be naturally associated to any maximal parabolic subgroup $P$ of a complex reductive group $G$, as described in \cite[Appendix C]{thesis}.  The ``orbit'' approach used in the second proof of Theorem \ref{thm:triality-formula} generalizes to the following problem:
{\it
Compute the equivariant classes of $P$-orbit (or $B$-orbit) closures for the adjoint action on $\lieg/\liep$.}
Solutions to this problem account for many of the known degeneracy locus formulas; see, e.g., \cite{fr-ss} and \cite{knutson-miller}.

A related problem is to classify situations where there are finitely many orbits.  In the case of $P$ acting on $\lieg/\liep$, this problem was investigated by Popov and R\"oehrle \cite{pr}, and such parabolic actions have been classified \cite{buhe,hr,jr}.  The classification of Borel or Levi subgroup actions on $\lieg/\liep$ with finitely many orbits appears to be unknown.

\subsection*{Acknowledgments}

This work is part of my Ph.\ D.\ thesis, and it is a pleasure to thank William Fulton for his encouragement in this project.  Thanks also to Danny Gillam for useful conversations about triality.

%%%%%%%%%%%%%%%%%%%%%%%%%%%%%%%%%%%%%%%%%%%%%%%%%%%%%%%
\section{Preliminaries}\label{sec:prelim}%%%%%%%%%%%%%%
%%%%%%%%%%%%%%%%%%%%%%%%%%%%%%%%%%%%%%%%%%%%%%%%%%%%%%%

All varieties are over $\C$.  We will write $X$ for the base variety.  If $E$ is a vector bundle on $X$, we write $E(x)$ for the fiber over $x\in X$.  We often suppress notation for pullback of vector bundles.

\subsection{Octonions}\label{subsec:octonions}

An \emph{octonion algebra} is an $8$-dimensional vector space $C$, together with a nondegenerate quadratic norm $N$ and a multiplication with unit, such that $N(uv) = N(u) N(v)$ for all $u,v\in C$.  Write $\<\;,\;\>$ for the symmetric bilinear form corresponding to $N$.  The notion of an octonion algebra globalizes easily to \emph{octonion bundles}, where $C$ is a rank $8$ vector bundle on a variety $X$, the multiplication is a vector bundle map $C\otimes C \to C$, and for simplicity we assume the norm takes values in $\O_X$.  For more on octonions and octonion bundles, see \cite[\S\S1--2]{sv}, \cite{petersson}, or \cite[\S2]{thesis}.

The group of algebra automorphisms of an octonion algebra is the simple complex Lie group of type $G_2$; abusing notation, we will write $G_2$ to denote this group.

Let $E$ be a rank two vector bundle on $X$.  By Proposition \ref{prop:bundle-constr}, $C = E \oplus End(E) \oplus E^*$ has a canonical structure of an octonion bundle.  It will be convenient to use a basis adapted to this construction, in the case where $X$ is a point, so $E$ is a $2$-dimensional vector space.  Let $v_1,v_2$ be a basis for $E$, and extend to a basis for $C=E\oplus \End(E) \oplus E^*$ by setting
\begin{eqnarray} \label{v-basis}
\begin{array}{rcl}
v_3 &=& v_2^*\otimes v_1 \\
v_4 &=& v_1^*\otimes v_1 \\
v_5 &=& v_2^*\otimes v_2 \\
v_6 &=& v_1^*\otimes v_2 \\
v_7 &=& v_2^* \\
v_8 &=& v_1^*.
\end{array}
\end{eqnarray}
Thus the identity element is $e=v_4+v_5$. 

With respect to this basis, the symmetric bilinear form $\<\;,\;\>$ is given by
\begin{eqnarray} \label{eqn:v-beta}
\begin{array}{rcl}
\< v_p,v_{9-q} \> &=& -\delta_{pq}, \text{ for }p,q\neq 4,5; \\
\< v_4,v_5 \> &=& 1.
\end{array}
\end{eqnarray}
Write $V = e^\perp \subset C$ for the orthogonal complement of the identity element with respect to $\<\;,\;\>$.  Thus $V$ is defined by $v_4^*+v_5^* = 0$.

Let the torus $T=(\C^*)^2$ acts on $C$ in this basis via the matrix
\begin{eqnarray}\label{t-action}
\diag(z_1, z_2, z_1 z_2^{-1}, 1, 1, z_1^{-1} z_2, z_2^{-1}, z_1^{-1}),
\end{eqnarray}
with weights
\begin{eqnarray}\label{t-weights}
\{t_1, t_2, t_1-t_2, 0, 0, -t_1+t_2, -t_2, -t_1\}.
\end{eqnarray}
This is induced from the standard action on $E = \mathrm{span}\,\{ v_1,v_2 \}$.  The algebra structure of $C$ is preserved by this action, so $T \subseteq G_2$; in fact, $T$ is a maximal torus.

\subsection{Roots and weights}\label{subsec:lie}

For general Lie-theoretic notions, we refer to \cite{humphreys-gps}; here we explain the relevant facts for type $G_2$.  Let $G_2$ be the automorphism group of an octonion algebra $C$, as above, let $T\subset B\subset G_2$ be a maximal torus and Borel subgroup, and let $\liet\subset\lieb\subset\lieg_2$ be the corresponding Lie algebras.  Once a basis for $C$ has been chosen as in \eqref{v-basis}, we will always take $T$ to be the torus acting as in \eqref{t-action}.  Write $\alpha_1$ and $\alpha_2$ for the two \emph{simple roots}, with $\alpha_2$ the long root.  In terms of the weights $t_1,t_2$ of \eqref{t-weights}, we have
\begin{eqnarray}
\begin{array}{rcl}
\alpha_1 &=& t_1 - t_2, \\
\alpha_2 &=& -t_1 + 2\,t_2.
\end{array}
\end{eqnarray}
The \emph{positive roots} are
\begin{eqnarray*}
\alpha_1,\; \alpha_2,\; \alpha_1+\alpha_2,\; 2\,\alpha_1+\alpha_2,\; 3\,\alpha_1+\alpha_2,\; 3\,\alpha_1+2\,\alpha_2;
\end{eqnarray*}
the \emph{negative roots} are $-\alpha$, for $\alpha$ a positive root.

Let $P \subset G_2$ be the standard maximal parabolic subgroup omitting the long root, with Lie algebra $\liep\subset\lieg_2$.  Thus $\liep = \lieb\oplus\lieg_{-\alpha_1}$, where $\lieg_{-\alpha_1} \subset \lieg_2$ is the weight space for the negative root $-\alpha_1$.

The \emph{Weyl group} is $W = N(T)/T$, where $N(T)$ is the normalizer of $T$ in $G_2$.  It is isomorphic to the dihedral group with $12$ elements, and is generated by the \emph{simple reflections} $s = s_{\alpha_1}$ and $t = s_{\alpha_2}$, and is defined by the relations $s^2 = t^2 = (st)^6 = 1$.  There is an embedding $W \hookrightarrow S_7$ coming from the action of $G_2$ on $V \subset C$, given by
\begin{eqnarray*}
s &\mapsto& 2\; 1\; 5\; 4\; 3\; 7\; 6, \\
t &\mapsto& 1\; 3\; 2\; 4\; 6\; 5\; 7.
\end{eqnarray*}
(See \cite[\S A.3]{thesis}.)  We will sometimes treat elements of $W$ as permutations via this embedding.

\subsection{Flag bundles and Schubert loci}\label{subsec:schubert}

Let $C$ be an octonion algebra, and let $V = e^\perp \subset C$ as before.  A subspace $E\subseteq C$ is \emph{$G_2$-isotropic} if $E\subseteq V$ and $uv=0$ for all $u,v\in E$.  A maximal $G_2$-isotropic subspace has dimension $2$, and a $G_2$-isotropic flag $E_1 \subset E_2 \subset V$ (with $\dim E_i = i$) can be canonically extended to a complete flag $E_1\subset E_2 \subset E_3 \subset \cdots \subset E_7 = V$.  

The \emph{$G_2$ flag variety $\Fl_{G_2}$} parametrizes all $G_2$-isotropic flags in $V\subset C$.  It is a six-dimensional projective homogeneous space, isomorphic to $G_2/B$ for a Borel subgroup $B\subset G_2$.  The \emph{$G_2$ Grassmannian} $\Gr_{G_2}$ parametrizes two-dimensional $G_2$-isotropic subspaces of $V$; this is isomorphic to the five-dimensional homogeneous space $G_2/P$.

For an octonion bundle $C$ on $X$, with its rank $7$ subbundle $V$, there is an associated \emph{$G_2$-isotropic flag bundle} $\FFl_{G_2}(V) \to X$, as well as a \emph{$G_2$-isotropic Grassmann bundle} $\GGr_{G_2}(V) \to X$.  These are (\'etale-)locally trivial fiber bundles, with fibers $\Fl_{G_2}$ and $\Gr_{G_2}$, respectively.  The flag bundle $\FFl_{G_2}$ comes with a tautological flag of subbundles $\tilde{E}_\bullet$ of $V$.

Given a complete $G_2$-isotropic flag of subbundles
\begin{eqnarray*}
F_1 \subset F_2 \subset \cdots \subset F_7 = V
\end{eqnarray*}
on $X$, the \emph{Schubert loci} in $\FFl_{G_2}(V)$ are defined by
\begin{eqnarray}
& &\OOmega_w(F_\bullet) = \{ x\in \FFl_{G_2} \,|\, \dim(\tilde{E}_p(x) \cap F_q(x)) \geq r_w(q,p) \text{ for } 1\leq p,q\leq 7 \},
\end{eqnarray}
where $\tilde{E}_\bullet$ is the tautological flag on $\FFl_{G_2}$, and for $w\in W$, $r_w(q,p)$ is \mbox{$\#\{i\leq q \,|\, w(8-i) \leq p \}$}.  (Here we are using the embedding $W\hookrightarrow S_7$ discussed above.  This definition of $r_w$ differs slightly from that of \cite{g2chern}; the two are related by a factor of $w_0$.)  The codimension of $\OOmega_w$ is the \emph{length} of $w$, i.e., the least number of simple transpositions needed to write $w$ as a word in $s$ and $t$.

If $E_\bullet$ is a second $G_2$-isotropic flag on $X$, it defines a section $s_{E_\bullet}:X \to \FFl_{G_2}$ such that $s_{E_\bullet}^*\tilde{E}_\bullet = E_\bullet$.  We define degeneracy loci in $X$ as the scheme-theoretic inverse images of Schubert loci:
\begin{eqnarray*}
\Omega_w(E_\bullet,F_\bullet) = s_{E_\bullet}^{-1}\OOmega_w(F_\bullet).
\end{eqnarray*}

Proofs of all these facts, with more details, can be found in \cite{thesis} and \cite{g2chern}.  (There the term ``$\gamma$-isotropic'' is used instead of ``$G_2$-isotropic,'' in reference to a trilinear form $\gamma$.)

%%%%%%%%%%%%%%%%%%%%%%%%%%%%%%%%%%%%%%%%%%%%%%%%%%%%%%%
\section{Triality symmetry}\label{sec:triality}%%%%%%%%
%%%%%%%%%%%%%%%%%%%%%%%%%%%%%%%%%%%%%%%%%%%%%%%%%%%%%%%

Triality symmetry is described in terms of coordinates as follows.  Assume $X$ is a point, so $E$ is a two-dimensional vector space.  Choose a basis $\{v_1,v_2\}$ for $E$, and let $\{v_3,\ldots,v_8\}$ be a basis for $\End(E) \oplus E^*$ as in \eqref{v-basis}.  Suppose $\phi:E \to \End(E) \oplus \exterior^2 E^*$ is given by $\phi = \phi_1\oplus \phi_2$, with
\begin{eqnarray*}
\phi_1(v_1) &=& \left(\begin{array}{cc} a_1 & b_1 \\ c_1 & d_1\end{array}\right), \\
\phi_1(v_2) &=& \left(\begin{array}{cc} a_2 & b_2 \\ c_2 & d_2\end{array}\right), 
\end{eqnarray*}
and $\phi_2(v_1) = z\, v_2^*$, $\phi_2(v_2) = -z\, v_1^*$.  In terms of the chosen bases for $E$ and $\End(E)\oplus E^*$, $\phi$ has matrix $A_\phi^t$, where
\begin{eqnarray}\label{eqn:a-mtx}
A_\phi=\left(\begin{array}{cccccc}
b_1 & a_1 & d_1 & c_1 & z & 0 \\
b_2 & a_2 & d_2 & c_2 & 0 & -z \end{array}\right).
\end{eqnarray}

Identify $\Hom(E,\End(E)) = E^*\otimes E^*\otimes E$ with $E^*\otimes E^*\otimes E^*$ by mapping
\begin{eqnarray*}
v_i^*\otimes v_j^* \otimes v_1 &\mapsto& v_{ij2}^*, \\
v_i^*\otimes v_j^* \otimes v_2 &\mapsto& -v_{ij1}^*,
\end{eqnarray*}
where $v_{ijk}^* = v_i^*\otimes v_j^* \otimes v_k^*$ for $1\leq i,j,k \leq 2$.  (The sign appears because of the canonical isomorphism $E^*\otimes E^*\otimes E \isom E^*\otimes E^*\otimes E^* \otimes \exterior^2 E$; we are using $v_1 \wedge v_2$ to identify $E \isom E^*\otimes \exterior^2 E$ with $E^*$.)  Thus $\phi$ is triality-symmetric iff the corresponding coordinates of $v^*_{ijk}$ are invariant under permutations of the indices.  Explicitly, there is an $S_3$-action on $\Hom(E,\End(E))\oplus\exterior^2 E^*$ generated by elements $\tau$ and $\sigma$, whose action on matrices $A_\phi$ is given by
\begin{eqnarray*}
\tau\left(\begin{array}{cccccc}
b_1 & a_1 & d_1 & c_1 & z & 0 \\
b_2 & a_2 & d_2 & c_2 & 0 & -z \end{array}\right)
=
\left(\begin{array}{cccccc}
-d_2 & -c_2 & -a_1 & c_1 & z & 0 \\
b_2 & b_1 & -a_2 & d_1 & 0 & -z \end{array}\right)
\end{eqnarray*}
and
\begin{eqnarray*}
\sigma\left(\begin{array}{cccccc}
b_1 & a_1 & d_1 & c_1 & z & 0 \\
b_2 & a_2 & d_2 & c_2 & 0 & -z \end{array}\right)
=
\left(\begin{array}{cccccc}
a_2 & a_1 & c_2 & c_1 & z & 0 \\
b_2 & b_1 & d_2 & d_1 & 0 & -z \end{array}\right).
\end{eqnarray*}
This means that the triality-symmetric maps are those whose matrix is of the form
\begin{eqnarray} \label{eqn:phi-mtx}
A_\phi=\left(\begin{array}{cccccc}
a & -d & d & c & z & 0 \\
b & a & -a & d & 0 & -z \end{array}\right).
\end{eqnarray}
Here $a$ is also the coordinate of $v_{122}^*$, $b$ is the coordinate of $v_{222}^*$, $-c$ is the coordinate of $v_{111}^*$, and $-d$ is the coordinate of $v_{112}^*$.  Note that the $S_3$-invariants coincide with the $\tau$-invariants.

\begin{remark}
``Triality'' usually refers to several phenomena related to the $S_3$ symmetry of the $D_4$ Dynkin diagram first described by Cartan \cite{cartan}; see \cite{kmrt} for a thorough discussion.  The connection with our context can be explained briefly as follows.  Automorphisms of the $D_4$ Dynkin diagram correspond to outer automorphisms of the simply-connected group $\mathit{Spin}_8$; these all fix a parabolic subgroup $P$, and therefore define automorphisms of $\mathit{Spin}_8/P \isom OG(2,8)$ and the tangent space $T_{eP}\mathit{Spin}_8/P$.  The tangent space can be identified with matrices as in \eqref{eqn:a-mtx}, and under this identification, the automorphism group $S_3$ acts as descibed above.
\end{remark}

%%%%%%%%%%%%%%%%%%%%%%%%%%%%%%%%%%%%%%%%%%%%%%%
\section{Graphs}\label{sec:graphs}%%%%%%%%%%%%%
%%%%%%%%%%%%%%%%%%%%%%%%%%%%%%%%%%%%%%%%%%%%%%%

For any morphism $\phi:E \to F$, let $E_\phi \subset E\oplus F$ be its graph, i.e., the subbundle whose fiber over $x$ is $E_\phi(x)= \{(v,\phi(v)) \,|\, v\in E(x)\}$.  If $\phi:E\to E^*$ is symmetric, then its graph is isotropic for the \emph{canonical skew-symmetric form} on $E\oplus E^*$, defined by $(v_1\oplus f_1, v_2\oplus f_2) = f_1(v_2)-f_2(v_1)$.  Thus one obtains a map to the Lagrangian bundle of isotropic flags in $E\oplus E^*$, and formulas for the degeneracy loci of $\phi$ are deduced from formulas for Schubert loci; see \cite{orthosymp} or \cite{fp}.

In this section, we consider morphisms $\phi:E\to End(E)\oplus E^*$.  By Proposition \ref{prop:bundle-constr}, there is a canonical octonion algebra structure on $E\oplus End(E)\oplus E^*$.  We give formulas for degeneracy loci of morphisms whose graphs are $G_2$-isotropic with respect to this structure.  In general such morphisms are not triality-symmetric (nor vice-versa).  For rank $1$ maps, however, the two notions agree. 

\begin{lemma} \label{lemma:rk1}
Suppose $X$ is a point, and $\phi:E\to\End(E)\oplus E^*$ is a triality-symmetric map, with matrix $A_\phi^t$ as in \eqref{eqn:phi-mtx}:
\begin{eqnarray*}
A_\phi = \left(\begin{array}{cccccc}
a & -d & d & c & z & 0 \\
b & a & -a & d & 0 & -z \end{array}\right).
\end{eqnarray*}
Then the graph $E_\phi$ is contained in $V\subset C$, and is $G_2$-isotropic if and only if
\begin{eqnarray} \label{eqn:rk1}
a^2 + bd = ac + d^2 = ad - bc = 0.
\end{eqnarray}
\end{lemma}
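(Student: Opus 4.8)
The plan is to unwind the definitions and reduce everything to linear algebra in the fixed $8$-dimensional octonion algebra $C = E \oplus \End(E) \oplus E^*$, using the explicit basis $v_1,\ldots,v_8$ from \eqref{v-basis} and the explicit multiplication table of $C$ (which we obtain from Proposition \ref{prop:bundle-constr}, or equivalently from the known structure constants for the split octonions in the basis adapted to \eqref{v-basis}). First I would check the easy half of the statement: the graph $E_\phi$ is spanned by the two vectors
\begin{eqnarray*}
w_1 &=& v_1 + a\,v_4 - d\,v_3 + d\,v_6 + c\,v_5 + z\,v_7,\\
w_2 &=& v_2 + b\,v_8 \text{-type terms}\ldots
\end{eqnarray*}
obtained by reading off the columns of $A_\phi$ against the basis $v_3,\ldots,v_8$ of $\End(E)\oplus E^*$. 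Because the $\End(E)$-component of $\phi$ is trace-free (the matrices in \eqref{eqn:phi-mtx} have the form $\left(\begin{smallmatrix} a & -d \\ d & c\end{smallmatrix}\right)$ and $\left(\begin{smallmatrix} b & a \\ -a & d\end{smallmatrix}\right)$, whose traces cancel against the identity $e = v_4+v_5$ only in the combination defining $V$), each $w_i$ satisfies the single linear equation $v_4^* + v_5^* = 0$ cutting out $V = e^\perp$; this is where triality symmetry is used, and it should be a one-line verification from \eqref{eqn:phi-mtx}. So $E_\phi \subset V$ automatically.

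Next, for the main equivalence, recall that $G_2$-isotropy of a $2$-plane $E_\phi \subseteq V$ means $uv = 0$ in $C$ for all $u,v \in E_\phi$; since the product is bilinear and alternating on an isotropic plane (one also checks $w_iw_i=0$ follows, using $N(w_i)=0$ which holds because $w_i \in V$ has the right norm — or simply because on a $G_2$-isotropic candidate plane the diagonal products vanish automatically once $w_1w_2 = 0$ and the plane lies in $V$), the whole condition collapses to the single vector equation $w_1 w_2 = 0$ in $C$. I would then compute $w_1 w_2$ explicitly using the multiplication table, expand in the basis $v_1,\ldots,v_8$, and collect coefficients. Each coefficient is a quadratic polynomial in $a,b,c,d$ (the variable $z$ should drop out, or only appear multiplied by entries that are forced to vanish — one should double-check this, as the $E^*$-part interacts with the product). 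The claim is that the resulting system of quadratics is generated by, and in fact equals up to signs and repetition, the three polynomials $a^2 + bd$, $ac + d^2$, $ad - bc$.

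The main obstacle, and the only real work, is bookkeeping the octonion multiplication: writing down $v_pv_q$ for the relevant pairs, keeping track of the many sign conventions (the ones in \eqref{v-basis}, in the identification $E \cong E^* \otimes \exterior^2 E$, and in the Cayley--Dickson-type construction of Proposition \ref{prop:bundle-constr}), and then verifying that the eight coefficient equations of $w_1w_2 = 0$ reduce exactly to \eqref{eqn:rk1} with no extra relations and no missing ones. I expect several coefficients to coincide or to be scalar multiples of one another, and a couple to be identically zero; the three surviving independent equations are $a^2 = -bd$, $d^2 = -ac$, $bc = ad$. It is worth noting as a sanity check that these three equations are exactly the $2\times 2$ minors condition saying the matrix $\left(\begin{smallmatrix} a & d & -c \\ -b & -a & d \end{smallmatrix}\right)$ — equivalently the symmetric trilinear form $\phi_1$ viewed as a cubic $bx^3 + 3ax^2y + \cdots$, wait, as the binary cubic with coefficients $b, a, -d, -c$ up to scaling — has a triple root, i.e. $\phi_1$ is a perfect cube $\ell^3$ for a linear form $\ell$; this matches the expectation that rank-$1$ triality-symmetric maps correspond to the $P$-orbit of highest weight vectors, and gives independent confidence that \eqref{eqn:rk1} is the right answer before grinding through the multiplication table. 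Finally, one remarks that on this locus the rank of $\phi$ is indeed $\le 1$: when $(a,b,c,d)$ satisfies \eqref{eqn:rk1} the two rows of $A_\phi$ in \eqref{eqn:phi-mtx} are proportional (away from the origin), so $E_\phi \subset V$ being $G_2$-isotropic is consistent with $\phi$ having rank at most $1$, justifying the assertion in the text that for rank $1$ the two notions of symmetry agree.
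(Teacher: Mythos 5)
Your plan is a correct but genuinely different route from the paper's. The paper does not compute any octonion products at all: it imports the explicit parametrization of the big open Schubert cell in the $G_2$ Grassmannian from \cite[\S D.1]{thesis}, which already encodes $G_2$-isotropy in its coordinates. Writing that chart as a $2\times 8$ matrix whose last two columns are $(-X,-Y)^t$ and $(-Z,\cdot)^t$ with $X=-ac-d^2$, $Y=z+ad-bc$, $Z=-a^2-bd$, the paper observes that the row span is a graph over $E$ precisely when $X=Z=0$ and $Y=z$, which is \eqref{eqn:rk1} on the nose. Your approach --- expand $w_1w_2$ in the basis $v_1,\dots,v_8$ via the multiplication of Proposition \ref{prop:bundle-constr} --- would also work and is more self-contained, but you trade the paper's one lookup against eight coefficient computations and a reduction of the resulting system to three generators.

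Two points you should tighten before grinding through the multiplication table. First, your claim that $G_2$-isotropy ``collapses to the single vector equation $w_1w_2=0$'' is not quite justified, and your parenthetical that ``$N(w_i)=0$ holds because $w_i\in V$'' is simply false: $V=e^\perp$ is not an isotropic subspace, and in fact $N(w_1)=-(ac+d^2)$ and $N(w_2)=-(a^2+bd)$ are nonzero in general --- they are exactly two of the three target equations. What is true is the following: since the octonion multiplication is not commutative, $G_2$-isotropy requires $w_1^2=w_1w_2=w_2w_1=w_2^2=0$. For $u\in V$ one has $u^2=-N(u)e$, and for $u,v\in V$ one has $uv+vu=-2\langle u,v\rangle e$; moreover $w_1w_2=0$ with $w_1,w_2\neq 0$ forces $N(w_1)=N(w_2)=0$ because left and right multiplication are norm similitudes. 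So the clean reduction is to $w_1w_2=0$ \emph{together with} $\langle w_1,w_2\rangle=0$ (the latter gives $w_2w_1=0$), and this second scalar condition is exactly $ad-bc=0$. You should state this and not sweep $w_2w_1$ under the rug. Second, your explicit $w_1$ has $v_3,v_4$ and $v_5,v_6$ transposed relative to the column ordering of $A_\phi$; reading row $1$ of \eqref{eqn:phi-mtx} against the basis $(v_3,\dots,v_8)$ gives $w_1=v_1+a\,v_3-d\,v_4+d\,v_5+c\,v_6+z\,v_7$. None of this invalidates the strategy, but the bookkeeping has to be exact or the system you obtain will not match \eqref{eqn:rk1}. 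Your observation that \eqref{eqn:rk1} is the $2\times2$-minor condition for the binary cubic to have a triple root is a good sanity check and is consistent with Proposition \ref{prop:orbit-desc} and Lemma \ref{lemma:orbit-rank}.
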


\begin{proof}
This is a straightforward verification, using the basis $\{v_i\}$ as in \eqref{v-basis}.  After a suitable change of coordinates (including a switch to opposite Schubert cells), the parametrization of the open Schubert cell given in \cite[\S D.1]{thesis} becomes
\begin{eqnarray}
\tilde\Omega^o = \left(\begin{array}{cccccccc}
1 & 0 & a & -d & d & c & z & -X \\
0 & 1 & b & a & -a & d & -Z & -Y \end{array}\right),
\end{eqnarray}
where $X = -ac-d^2$, $Y = z + ad - bc$, and $Z = -a^2 - bd$.  It is clear that the row span is always in $V\subset C$, since the fourth and fifth columns add to zero.  The condition that the row span be the graph $E_\phi$ means $X=Z=0$ and $Y=z$, which are precisely the equations \eqref{eqn:rk1}.
\end{proof}

\begin{corollary} \label{cor:rk1}
Let $\phi:E \to End(E)\oplus E^*$ be a morphism of rank at most $1$, and such that the component $\phi_2:E\to E^*$ is zero.  Then $\phi$ is triality-symmetric if and only if $E_\phi \subset C$ is $G_2$-isotropic.  (This holds scheme-theoretically, i.e., the equations locally defining these two subsets of $\Hom(E,\End(E))$ are the same.)
\end{corollary}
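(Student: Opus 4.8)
The plan is to reduce both implications to explicit polynomial identities, using the parametrization $\tilde\Omega^o$ of the big Schubert cell of $\Gr_{G_2}$ produced in the proof of Lemma~\ref{lemma:rk1}. The hypothesis $\phi_2 = 0$ lets us work inside $\Hom(E,\End(E))$, writing $\phi = \phi_1$ with matrix as in \eqref{eqn:a-mtx} but with $z = 0$.

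For the forward implication, suppose in addition that $\phi$ is triality-symmetric, so its matrix is \eqref{eqn:phi-mtx} with $z = 0$. First I would compute the $2\times 2$ minors of the $2\times 4$ block obtained by deleting the last two columns: up to sign and repetition these are exactly the three polynomials $a^2+bd$, $ac+d^2$, $ad-bc$ of \eqref{eqn:rk1}. Hence $\phi$ has rank at most $1$ precisely when \eqref{eqn:rk1} holds, and Lemma~\ref{lemma:rk1} then identifies this with $E_\phi$ being $G_2$-isotropic; so adding the hypothesis that $\phi$ have rank at most $1$ yields the forward direction.

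For the converse, suppose $\phi_2 = 0$ and $E_\phi$ is $G_2$-isotropic. Since $E_\phi$ projects isomorphically onto $\mathrm{span}\{v_1,v_2\}$, it lies in the big Schubert cell of $\Gr_{G_2}$, so by the proof of Lemma~\ref{lemma:rk1} it is the row span of $\tilde\Omega^o$ for a unique tuple $(a,b,c,d,z)$. The condition $\phi_2 = 0$ is precisely the vanishing of the last two columns of $\tilde\Omega^o$, i.e.\ $z = 0$ and $X = Y = Z = 0$; since $Y = z + ad - bc$, this amounts to $z = 0$ together with \eqref{eqn:rk1}. Reading off the remaining four columns of $\tilde\Omega^o$ then exhibits $\phi$ as exactly the triality-symmetric map with matrix \eqref{eqn:phi-mtx} (and $z = 0$), so $\phi$ is triality-symmetric; its rank is then automatically at most $1$, so the stated rank hypothesis is not needed for this direction.

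Finally, for the scheme-theoretic statement I would upgrade both arguments to an identity of ideals in the coordinate ring of $\Hom(E,\End(E))$. The locus where $E_{\phi_1}$ is $G_2$-isotropic is cut out by the pullback, along $\phi_1\mapsto E_{\phi_1}$, of the equations defining $\Gr_{G_2}$ in the big cell of the ambient Grassmannian; the parametrization $\tilde\Omega^o$ presents these as four linear relations among the entries of $\phi_1$ together with three quadrics, and a short computation identifies them, in the induced coordinates $a,b,c,d$, with the four linear forms defining the triality-symmetric locus and with $a^2+bd$, $ac+d^2$, $ad-bc$ — which together generate the ideal of $\{\text{triality-symmetric}\}\cap\{\text{rank}\le 1\}$. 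The one point requiring care is that the big cell of $\Gr_{G_2}$ must be given its reduced structure, isomorphic to $\A^5$ via $\tilde\Omega^o$, so that this pullback really computes the scheme "$E_\phi$ is $G_2$-isotropic"; but this is already implicit in the proof of Lemma~\ref{lemma:rk1}. The remaining work is the elementary linear algebra matching the two sets of generators, which I expect to be the main — though entirely mechanical — obstacle.
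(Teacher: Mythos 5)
Your proof is correct and takes essentially the same approach as the paper, which disposes of the corollary in two sentences by invoking the coordinate computation in the proof of Lemma~\ref{lemma:rk1} with $z=0$; you are simply supplying the details that the paper leaves implicit (identifying the $2\times2$ minors of the $2\times4$ block with the polynomials in \eqref{eqn:rk1}, reading the converse off the parametrization $\tilde\Omega^o$, and matching ideals for the scheme-theoretic claim).
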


\begin{proof}
This is a local statement, so we may assume $X$ is a point and compute in coordinates.  In this case, it follows from Lemma \ref{lemma:rk1} by adding the equation $z=0$.  (The rank condition is forced by $\phi_2\equiv 0$.)
\end{proof}

Corollary \ref{cor:rk1} says that the formulas of Theorem \ref{thm:triality-formula} (for triality-symmetric morphisms) will agree with formulas for morphisms with $G_2$-isotropic graphs.

\begin{proof}[First proof of Theorem \ref{thm:triality-formula}]
Let $\phi:E \to End(E) \oplus E^*$ have $G_2$-isotropic graph $E_\phi$.  Suppose $E$ has a rank $1$ subbundle, so $E_\phi$ also does.  (One can always arrange for this, by passing to a $\P^1$-bundle if necessary.)  Write $E_1 \subset E_2 = E$ and $F_1\subset F_2 = E_\phi$, and extend these to complete $G_2$-isotropic flags $E_\bullet$ and $F_\bullet$.  For $w\in W$, set $\Omega_w(\phi) = \Omega_w(E_\bullet, F_\bullet)$.  Since $E_\phi \isom E$, the Chern classes are the same.  Let $-x_1,-x_2$ be Chern roots of $E$ (so $x_1,x_2$ are Chern roots of $E^*$).  Then by \cite[Theorem 2.4 and \S2.5]{g2chern}, we have
\begin{eqnarray}
[\Omega_w(\phi)] = P_w(x_1,x_2;-x_1,-x_2)
\end{eqnarray}
in $H^*X$, where $P_w(x_1,x_2;y_1,y_2)$ is the ``$G_2$ double Schubert polynomial'' defined in \cite{g2chern}.

It remains to determine the $w$ for which $D_r(\phi) = \Omega_w(\phi)$.  We have
\begin{eqnarray*}
D_r(\phi) = \{x\in X \,|\, \dim(E(x) \cap E_\phi(x)) \geq 2-r \},
\end{eqnarray*}
and it is easy to check that
\begin{eqnarray*}
D_2(\phi) &=& \Omega_{id}(\phi)  = X, \\ 
D_1(\phi) &=& \Omega_{tst}(\phi) , \\
D_0(\phi) &=& \Omega_{tstst}(\phi) .
\end{eqnarray*}
Indeed, the element $tst\in W$ corresponds to the permutation $3\;6\;1\;4\;7\;2\;5$, so the condition defining $\Omega_{tst}$ is $\dim(E_2\cap F_2) \geq r_{tst}(2,2) = 1$.  The other two identities are clear.  This also justifies our definition of expected codimension for triality-symmetric degeneracy loci: the expected codimension of $D_r(\phi)$ is the length of the corresponding element of $W$.

Specializing the polynomials $P_w$ given in \cite[\S D.2]{thesis} for these three $w$'s, we obtain the desired formulas.
\end{proof}

\begin{remark}
The twelve polynomials $P_w(x_1,x_2;-x_1,-x_2)$ become the equivariant localizations of Schubert classes in $G_2/B$ at the point $eB$, after the substitution $x_i = -t_i$.  See \cite[\S D.3]{thesis}.
\end{remark}

%%%%%%%%%%%%%%%%%%%%%%%%%%%%%%%%%%%%%%%%%%%
\section{Orbits}\label{sec:orbits}%%%%%%%%%
%%%%%%%%%%%%%%%%%%%%%%%%%%%%%%%%%%%%%%%%%%%

Another approach to the computation of triality-symmetric degeneracy loci is as follows.  Inside the vector bundle $\left(Sym^3 E^* \otimes \exterior^2 E\right) \oplus \exterior^2 E^* \subset Hom(E, End(E)\oplus E^*)$, there is a locus $\mathbf{D}_r$ consisting of morphisms of rank at most $r$.  By definition, a triality-symmetric morphism $\phi$ defines a section $s_\phi$ of $\left(Sym^3 E^* \otimes \exterior^2 E\right) \oplus \exterior^2 E^*$, and $D_r(\phi) = s_\phi^{-1}\mathbf{D}_r$ is the scheme-theoretic preimage.

It suffices to solve this problem on the classifying space for the vector bundle $E$ (or on algebraic approximations thereof), so let $X = BGL_2$.\footnote{Topologically, we may assume $E$ is pulled back from the tautological bundle on $Gr(2,n)$, for $n\gg 0$, so one can take a Grassmannian for an approximation to $BGL_2$.}  Replace $E$ with the standard representation of $GL_2$, and write
\begin{eqnarray*}
U = \left(\Sym^3 E^* \otimes \exterior^2 E\right) \oplus \exterior^2 E^*.
\end{eqnarray*}
The relevant vector bundle on $BGL_2$ is $U \times^{GL_2} EGL_2$, where $EGL_2 \to BGL_2$ is the universal principal $GL_2$-bundle.  Letting $D_r \subseteq W \subset \Hom(E,\End(E)\oplus E^*)$ be the locus of maps of rank at most $r$, we have
\begin{eqnarray*}
\mathbf{D}_r = D_r \times^{GL_2} EGL_2 \subseteq U \times^{GL_2} EGL_2.
\end{eqnarray*}
Therefore $[\mathbf{D}_r] = [D_r]^{GL_2}$ in $H^*(U \times^{GL_2} EGL_2) = H_{GL_2}^*(U)$, and the problem becomes a computation in the equivariant cohomology of the vector space $U$.

Moreover, as we shall see below, $D_r$ is an orbit closure for the action of $GL_2$ on $U$.  In fact, we will use a larger group action.  As discussed in \S\ref{sec:intro}, $U$ may be identified with the tangent space
\begin{eqnarray*}
T_{[E]}G_2/P \isom \lieg_2/\liep,
\end{eqnarray*}
so $P$ acts on $U$ via the adjoint action on $\lieg_2/\liep$.  Let $P = L\cdot P_u$ be the Levi decomposition, with $P_u$ the unipotent radical and $L$ a Levi subgroup; $L$ is isomorphic to $GL_2$.  We will be interested in $P$-orbit closures in $\lieg_2/\liep$.

The $L$-action on $\lieg_2/\liep$ is identified with the natural $GL_2$-action on $U$: as an $L$-module, we have
\begin{eqnarray*}
\lieg_2/\liep \isom \left( \Sym^3 E^* \otimes \exterior^2 E \right) \oplus \exterior^2 E^*,
\end{eqnarray*}
where $E\isom \C^2$ is the standard representation of $L \isom GL_2$ (with weights $t_1=2\alpha_1+\alpha_2$ and $t_2=\alpha_1+\alpha_2$).  As a $P$-module, $\lieg_2/\liep$ does not split, but there is an exact sequence
\begin{eqnarray*}
0 \to \Sym^3 E^* \otimes \exterior^2 E \to \lieg_2/\liep \to \exterior^2 E^* \to 0.
\end{eqnarray*}
These facts follow directly from the weight decomposition of $\lieg_2/\liep$:
the $T$-weights are
\begin{eqnarray}
-\alpha_2,\; -\alpha_1-\alpha_2,\; -2\alpha_1-\alpha_2,\; -3\alpha_1-\alpha_2,\; -3\alpha_1-2\alpha_2.
\end{eqnarray}

As a first step to computing the classes of $P$-orbits in $H_T^*(\lieg/\liep)$, we give explicit descriptions of these orbits.

By the classification given in \cite{jr}, there are finitely many $P$-orbits on $\lieg/\liep$.  In fact, there are five orbits.  To describe them, let
\begin{eqnarray*}
U'=\Sym^3 E^* \otimes \exterior^2 E \subset U = \lieg_2/\liep.
\end{eqnarray*}
Let $b,a,d,c$ be coordinates on $U'$, with weights $-\alpha_2,-\alpha_1-\alpha_2,-2\alpha_1-\alpha_2,-3\alpha_1-\alpha_2$, respectively.  The five orbits are $O_c$, with $c=0,1,2,3,5$ giving the codimension; their closures are nested and described by the following proposition:

\begin{proposition} \label{prop:orbit-desc}
The $P$-orbit closures in $U=\lieg_2/\liep$ are as follows:
\begin{itemize}
\item $\overline{O_0} = U$.
\item $\overline{O_1} = U'$.
\item $\overline{O_2}$ is the discriminant locus in $U'$, defined by the vanishing of the quartic polynomial $a^2 d^2 + 4a^3c + 4bd^3 - 27b^2c^2 + 18abcd$.
\item $\overline{O_3}$ is the (affine) cone over the twisted cubic curve in $\P^3 = \P U'$, defined by the condition that the matrix
\begin{eqnarray*}
\left(\begin{array}{ccc} a & -d & c \\ b & a & d \end{array}\right)
\end{eqnarray*}
have rank $1$.
\item $\overline{O_5} = O_5 = \{0\}$.
\end{itemize}
\end{proposition}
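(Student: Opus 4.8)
The plan is to identify each $P$-orbit explicitly, compute its dimension by exhibiting enough tangent directions coming from $\liep$, and then check the claimed defining equations by a direct (but organized) calculation in the coordinates $b,a,d,c$ on $U'$ together with the coordinate $z$ on $\exterior^2 E^*$. Since by \cite{jr} there are exactly five $P$-orbits, with codimensions $0,1,2,3,5$, it suffices to produce five $P$-stable subsets of those codimensions whose closures are nested; the classification then forces these to be precisely the $\overline{O_c}$.

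First I would pin down the group action. The Levi $L\isom GL_2$ acts on $U'=\Sym^3 E^*\otimes\exterior^2 E$ as binary cubic forms (twisted by a determinant character), so $(b,a,d,c)$ transform as the coefficients of a binary cubic $f(s,t)=b\,s^3+3a\,s^2 t+3d\,s t^2+c\,t^3$ up to the normalization fixed in \S\ref{sec:triality}; the last coordinate $z$ spans $\exterior^2 E^*$, on which $L$ acts by the determinant and on which the unipotent radical $P_u$ acts trivially. The unipotent radical $P_u$ is one-dimensional, corresponding to the root space $\lieg_{\alpha_1}$, and from the weight list $-\alpha_2,-\alpha_1-\alpha_2,-2\alpha_1-\alpha_2,-3\alpha_1-\alpha_2,-3\alpha_1-2\alpha_2$ one reads off that $\ad$ of a root vector $e_{\alpha_1}$ shifts weights by $\alpha_1$, hence maps (the class of) $z$ into $U'$ and permutes the coordinates of $U'$ nilpotently. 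In particular $P$ preserves $U'$, giving $\overline{O_1}=U'$ as the unique codimension-$1$ orbit closure, and the open orbit is its complement, $\overline{O_0}=U$.

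Next, within $U'$ the $L\isom GL_2$-action is the classical action on binary cubics, whose orbits are well known: the cubics with distinct roots (one orbit, the complement of the discriminant), those with a double root but not a triple root, those with a triple root (the cone over the twisted cubic, i.e.\ rank $\le 1$ for the displayed $2\times 3$ catalecticant-type matrix), and $0$. One must check that $P_u$ does not merge any of these: since $P_u$ acts on $U'$ unipotently and $L$-equivariantly, it preserves the $L$-invariant stratification by root multiplicity, so the $P$-orbits in $U'$ coincide with the $L$-orbits there. This identifies $\overline{O_2}$ with the discriminant hypersurface and $\overline{O_3}$ with the twisted-cubic cone, and the quartic $a^2d^2+4a^3c+4bd^3-27b^2c^2+18abcd$ is exactly the discriminant of the binary cubic $f$ in the chosen normalization — a computation I would verify directly from the formula for the cubic discriminant, keeping careful track of the factors of $3$ and the sign conventions of \eqref{eqn:phi-mtx}. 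The rank-$1$ condition on $\left(\begin{smallmatrix} a & -d & c \\ b & a & d\end{smallmatrix}\right)$ is the standard set of $2\times 2$ minors cutting out the twisted cubic (equivalently, $f$ and $f'$ share a common factor to order two), and I would confirm the $2\times 2$ minors reproduce the usual equations $a^2+bd=0$, $ad-bc=0$, $d^2+ac=0$ — note these are precisely equations \eqref{eqn:rk1} of Lemma \ref{lemma:rk1}, which is the promised compatibility with the rank-$1$ degeneracy locus.

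The dimension bookkeeping finishes the proof: $\dim U=5$, $\dim U'=4$, the discriminant is a hypersurface in $U'$ so has dimension $3$, the twisted-cubic cone has dimension $2$, and $\{0\}$ has dimension $0$; the closures are visibly nested $U\supset U'\supset\{\mathrm{disc}=0\}\supset\{\text{rank}\le 1\}\supset\{0\}$. Matching against the codimensions $0,1,2,3,5$ from \cite{jr} identifies these with $\overline{O_0},\overline{O_1},\overline{O_2},\overline{O_3},\overline{O_5}$ respectively (and shows there is no orbit of codimension $4$, consistent with the jump in the list). I expect the main obstacle to be purely computational: getting the normalization of the binary cubic right so that the discriminant comes out with the stated coefficients $4,4,-27,18$ and the rank matrix comes out with the stated signs — this requires care with the identification $\Hom(E,\End(E))\isom E^*\otimes E^*\otimes E^*$ and the factor $v_1\wedge v_2$ used in \S\ref{sec:triality}. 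A secondary point needing a line of justification is that $P_u$ genuinely fixes the strata setwise rather than just preserving their union; this follows because $P_u$ is connected and normalized by $L$, so it permutes the finitely many $L$-orbits and, being connected, fixes each.
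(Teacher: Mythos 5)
Your overall strategy (identify $U'$ with binary cubics, invoke the classical $GL_2$-orbit classification, verify the defining equations, and match dimensions against the count from \cite{jr}) is the same as the paper's, and the conclusion is correct, but there is a genuine error in the Lie-theoretic bookkeeping that the paper handles differently and more cleanly.

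You assert that $P_u$ is one-dimensional with Lie algebra $\lieg_{\alpha_1}$. This is wrong: since $P$ is the parabolic omitting the \emph{long} root $\alpha_2$, the Levi $L\isom GL_2$ has root spaces $\lieg_{\pm\alpha_1}$, and the unipotent radical $P_u$ is \emph{five}-dimensional, with Lie algebra spanned by $\lieg_\beta$ for $\beta\in\{\alpha_2,\,\alpha_1+\alpha_2,\,2\alpha_1+\alpha_2,\,3\alpha_1+\alpha_2,\,3\alpha_1+2\alpha_2\}$. Consequently $e_{\alpha_1}$ is a Levi element, and $\ad(e_{\alpha_1})$ applied to the class of $z$ (weight $-3\alpha_1-2\alpha_2$) gives weight $-2\alpha_1-2\alpha_2$, which is \emph{not} a weight of $\lieg/\liep$, so $e_{\alpha_1}$ kills $z$ rather than mapping it into $U'$; it is $e_{\alpha_2}$ that carries $z$ into $U'$ (this is exactly why the $P$-module sequence $0\to U'\to U\to\exterior^2 E^*\to 0$ fails to split).

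The argument you give at the end — that $P_u$ is ``normalized by $L$,'' hence permutes the finitely many $L$-orbits, hence fixes each — is circular: $L$ normalizes $P_u$, not the other way around, and showing $P_u\cdot Lx\subseteq Lx$ is precisely the thing to be proved. The paper's observation is simpler and closes this gap: for every $\beta$ in the five positive roots of $\lien_u$ above and every weight $\mu\in\{-\alpha_2,-\alpha_1-\alpha_2,-2\alpha_1-\alpha_2,-3\alpha_1-\alpha_2\}$ of $U'$, the sum $\beta+\mu$ is never a weight of $\lieg/\liep$; hence $P_u$ acts \emph{trivially} on $U'$, the $P$-action on $U'$ factors through $P/P_u\isom GL_2$, and the $P$-orbits in $U'$ are literally the $GL_2$-orbits on binary cubics. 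Once you replace your $P_u$ discussion with this weight check, the rest of your argument goes through; you should also note that your normalization $f=bs^3+3as^2t+3dst^2+ct^3$ (with binomial coefficients) does not reproduce the stated quartic — the paper's normalization is $-cx^3-dx^2y+axy^2+by^3$, whose discriminant $18abcd+4bd^3+a^2d^2+4a^3c-27b^2c^2$ matches exactly.
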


\begin{proof}
The first claim is that $U\setminus U' = O_0$ is a single dense orbit.  This follows from the classification of \cite[Table 2]{buhe}.

It remains to verify the orbit decomposition of $U'$.  From the weights, we see that $P_u$ acts trivially on $U'$, so the effective action is by $P/P_u \isom GL_2$.  Identify $U'$ with the space of homogeneous cubic polynomials in two variables: $U = \{ -cx^3 - dx^2 y + ax y^2 + by^3\}$, with $GL_2$ acting so that the weights on $a,b,c,d$ are as specified before the proposition.  We see that there are four orbits in $U'$: the polynomials with distinct roots, those with a double root, those with a triple root, and the zero polynomial.  The given equations for the closures of these loci are well known; see e.g., \cite[IV, Ex. 12(b)]{lang} for the discriminant and \cite[\S1.1]{fp} for the cubic curve.  The proposition follows.
\end{proof}

From the description in terms of cubic polynomials, it is easy to find representatives for orbits in $U'$.  Here we give representatives as weight vectors in $\lieg/\liep$.  Let $Y_\alpha \in \lieg_2/\liep$ be a weight vector for $\alpha$.  We have
\begin{eqnarray*}
O_0 &=& P\cdot Y_{-3\alpha_1-2\alpha_2} = U \setminus U'; \\
O_1 &=& P\cdot (Y_{-3\alpha_1-\alpha_2}+Y_{-\alpha_2}) \isom P/P_u \isom GL_2 ; \\
O_2 &=& P\cdot Y_{-\alpha_1-\alpha_2}; \\
O_3 &=& P\cdot Y_{-\alpha_2}; \\
O_5 &=& \{0\}.
\end{eqnarray*}

Using Proposition \ref{prop:orbit-desc}, it is a simple matter to compute the equivariant classes.
\begin{theorem} \label{thm:orbit-formula}
In $H_T^*(U) = \Z[\alpha_1,\alpha_2] = \Z[t_1,t_2]$, we have
\begin{eqnarray*}
{[\overline{O_0}]} &=& 1  \\
{[\overline{O_1}]} &=& -3\alpha_1-2\alpha_2 \\ &=& -t_1-t_2  \\
{[\overline{O_2}]} &=& 2(-3\alpha_1-2\alpha_2)^2 \\ &=& 2(t_1+t_2)^2 ; \\
{[\overline{O_3}]} &=& -3(\alpha_1+\alpha_2)(2\alpha_1+\alpha_2)(3\alpha_1+2\alpha_2) \\ &=& -3t_1 t_2 (t_1+t_2) ; \\
{[\overline{O_5}]} &=& -\alpha_2(\alpha_1+\alpha_2)(2\alpha_1+\alpha_2)(3\alpha_1+\alpha_2)(3\alpha_1+2\alpha_2) \\ &=& t_1 t_2 (t_1+t_2)(2t_1-t_2)(t_1-2t_2).
\end{eqnarray*}
\end{theorem}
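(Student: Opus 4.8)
The plan is to compute each $[\overline{O_c}]$ by choosing a suitable $T$-invariant subvariety (often a linear subspace or a complete intersection of $T$-eigen-polynomials) whose class is visibly an equivariant Euler class, and to leverage the nesting $\overline{O_5}\subset\overline{O_3}\subset\overline{O_2}\subset\overline{O_1}\subset\overline{O_0}=U$ together with Proposition~\ref{prop:orbit-desc}. The two extreme cases are immediate: $[\overline{O_0}]=[U]=1$, and $[\overline{O_5}]=[\{0\}]$ is the equivariant top Chern class of $U=\lieg_2/\liep$, namely the product of all five $T$-weights $-\alpha_2,-\alpha_1-\alpha_2,-2\alpha_1-\alpha_2,-3\alpha_1-\alpha_2,-3\alpha_1-2\alpha_2$. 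Rewriting these weights in terms of $t_1=2\alpha_1+\alpha_2$, $t_2=\alpha_1+\alpha_2$ (so $\alpha_1=t_1-t_2$, $\alpha_2=-t_1+2t_2$) gives the stated product $t_1t_2(t_1+t_2)(2t_1-t_2)(t_1-2t_2)$; the sign bookkeeping is the only thing to watch.

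Next I would handle $\overline{O_1}$ and $\overline{O_3}$, which are the cleanest of the remaining three. By Proposition~\ref{prop:orbit-desc}, $\overline{O_1}=U'$ is a $T$-invariant linear hyperplane in $U$; since the quotient $U/U'\isom\exterior^2 E^*$ carries the single $T$-weight $-3\alpha_1-2\alpha_2$, its class is $[\overline{O_1}]=-3\alpha_1-2\alpha_2=-t_1-t_2$. For $\overline{O_3}$, the cone over the twisted cubic in $\P U'$, I would use that it is the rank $\le 1$ locus of the displayed $2\times 3$ matrix with entries of weights $-\alpha_1-\alpha_2,-2\alpha_1-\alpha_2,-3\alpha_1-\alpha_2$ in the first row (up to sign) and $-\alpha_2,-\alpha_1-\alpha_2,-2\alpha_1-\alpha_2$ in the second — equivalently realize $\overline{O_3}$ as a determinantal locus and apply the equivariant Giambelli--Thom--Porteous formula, or more concretely note it is a codimension-$2$ Cohen--Macaulay variety of degree $3$ whose class must be the unique weight-$3$ polynomial compatible with the symmetry and the generic behavior. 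The expected answer $-3(\alpha_1+\alpha_2)(2\alpha_1+\alpha_2)(3\alpha_1+2\alpha_2)=-3t_1t_2(t_1+t_2)$ should drop out; I would double-check it by equivariant localization at the $T$-fixed points of $\P U'$ lying on the twisted cubic, i.e.\ at the two weight lines $Y_{-\alpha_2}$ and $Y_{-3\alpha_1-\alpha_2}$.

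The main obstacle is $\overline{O_2}$, the quartic discriminant hypersurface in $U'$. Here the naive guess ``class $=$ the weight of the defining quartic'' is wrong because the discriminant polynomial $a^2d^2+4a^3c+4bd^3-27b^2c^2+18abcd$ is \emph{not} a $T$-eigenvector in the usual grading — rather, the hypersurface it cuts out is $T$-invariant, so the class is $4$ times a weight-$1$ class, but \emph{which} weight-$1$ class requires care. The clean way is to use that $\overline{O_2}$ is, set-theoretically, the locus in $U'\isom\{\text{binary cubics}\}$ of forms with a repeated root, i.e.\ the image of the $2$-to-$1$ map $\P^1\times\P^1\to\P U'$, $(\ell,m)\mapsto \ell^2 m$; the discriminant vanishes to order one there, so $\overline{O_2}$ is reduced and its class is the equivariant pushforward of $1$ under this parametrization, computable by localization at the four $T$-fixed points of $\P^1\times\P^1$. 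Alternatively — and this is probably the slickest route — I would invoke the general principle that for a $G$-variety with finitely many orbits, $[\overline{O}]$ can be pinned down by its restrictions (via the ``substitution/specialization'' technique of \cite{fnr}): the class is the unique polynomial of the right degree vanishing to the right order along the deeper orbit closures $\overline{O_3},\overline{O_5}$ and having the right leading term, forcing $[\overline{O_2}]=2(3\alpha_1+2\alpha_2)^2=2(t_1+t_2)^2$. I expect the factor of $2$ (from the degree of the discriminant divided by its generic "interpretation multiplicity," or equivalently from the $2$-to-$1$ parametrization) to be the subtle point, and I would verify it independently by a direct localization computation as a sanity check.
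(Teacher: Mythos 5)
Your treatment of $\overline{O_0},\overline{O_1},\overline{O_3},\overline{O_5}$ is correct and essentially matches the paper: $\overline{O_1}=U'$ gives the weight $-3\alpha_1-2\alpha_2$ of the normal direction, $\overline{O_3}$ is a twisted cubic cone and the paper likewise invokes the classical Giambelli/Salmon--Roberts formula (and the paper also pushes forward from $H_T^*(U')$ to $H_T^*(U)$ by multiplying by $[U']$, since the restriction map is an isomorphism), and $\overline{O_5}$ is the product of all $T$-weights.

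Your discussion of $\overline{O_2}$, however, rests on a false premise. The discriminant $a^2d^2+4a^3c+4bd^3-27b^2c^2+18abcd$ \emph{is} a $T$-eigenvector: with the coordinate weights $b\colon -\alpha_2$, $a\colon-\alpha_1-\alpha_2$, $d\colon-2\alpha_1-\alpha_2$, $c\colon-3\alpha_1-\alpha_2$, every one of the five monomials has weight $-6\alpha_1-4\alpha_2$ (check each: $a^2d^2$, $a^3c$, $bd^3$, $b^2c^2$, $abcd$ all give $-6\alpha_1-4\alpha_2$). This is also forced conceptually, since the discriminant of a binary cubic is a $GL_2$-semi-invariant and hence a $T$-eigenvector. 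Consequently the ``naive guess'' you rejected is exactly right and is what the paper does: $[\overline{O_2}]$ in $H_T^*(U')$ is the weight $-6\alpha_1-4\alpha_2=2(-3\alpha_1-2\alpha_2)$ of the defining equation, and multiplying by $[U']=-3\alpha_1-2\alpha_2$ gives $2(-3\alpha_1-2\alpha_2)^2=2(t_1+t_2)^2$. The factor of $2$ comes purely from this arithmetic, not from any multiplicity or degree/parametrization count. Your secondary claim is also incorrect: the map $(\ell,m)\mapsto\ell^2 m$ from $\P^1\times\P^1$ to the projectivized discriminant locus is generically \emph{injective} (a cubic with exactly one double root determines the double root $\ell$ and simple root $m$ uniquely), not $2$-to-$1$; the degree $4$ of the image in $\P^3$ matches the degree of the discriminant with no multiplicity correction. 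You would still obtain the correct class by an honest localization computation along this parametrization, but the rationale you gave for needing it does not hold, and it is considerably more work than the one-line weight argument.
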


\begin{proof}
The normal space to $U'=\overline{O_1}\subset U$ has weight $-3\alpha_1-2\alpha_2$, so the formula for $[\overline{O_1}]$ is clear.  Since the restriction $H_T^*(U) \to H_T^*(U')$ is an isomorphism, the Gysin pushforward $H_T^*(U')\to H_T^*(U)$ is multiplication by $[U']$.  Therefore it suffices to compute the remaining classes in $H_T^*(U')$.  The locus $\overline{O_2}$ is a hypersurface in $U'$ defined by an equation of weight $-6\alpha_1-4\alpha_2$, so its class in $H_T^*(U)$ is $(-6\alpha_1-4\alpha_2)\cdot [U']$.  The class of $[\overline{O_3}]$ in $H_T^*(U')$ is found by the classical Giambelli (or Salmon--Roberts) formula (see e.g. \cite[\S1.1]{fp}).  Finally, the class of the origin is the product of all the $T$-weights on $U$.
\end{proof}

\begin{remark} \label{rmk:restriction}
These classes cannot be computed by the ``restriction equation'' method of Feh\'er and Rim\'anyi \cite{fr}, because the stabilizer of $O_1 = P/P_u$ is unipotent.  This means the restriction map $H_P^*(U) \to H_P^*(O_1) \isom H_{P_u}^*(pt) = H^*(pt)$ is zero in positive degees, and all the restriction equations are of the form $0=0$.  (The problem persists for the other orbits.)
\end{remark}

\begin{lemma} \label{lemma:orbit-rank}
The orbit
\begin{eqnarray*}
O_3 \subset \lieg/\liep \subset \Hom(E,\End(E)\oplus E^*)
\end{eqnarray*}
consists of the triality-symmetric morphisms of rank $1$.
\end{lemma}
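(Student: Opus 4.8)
The plan is to identify the orbit $O_3$ concretely using the weight-vector representative already supplied, namely $O_3 = P\cdot Y_{-\alpha_2}$, and then compare this with the rank-$1$ locus of triality-symmetric morphisms described by Lemma~\ref{lemma:rk1}. First I would recall from Proposition~\ref{prop:orbit-desc} that $\overline{O_3}$ is the cone over the twisted cubic in $\P U'$, cut out by the $2\times 2$ minors of $\left(\begin{smallmatrix} a & -d & c \\ b & a & d \end{smallmatrix}\right)$; equivalently, under the identification $U' = \{-cx^3 - dx^2y + axy^2 + by^3\}$, it is the locus of cubics with a triple root (together with $0$). So $O_3$ itself consists of the nonzero cubics with a triple root, and in coordinates these are exactly the $(a,b,c,d)$, not all zero, for which the three quadratic equations
\begin{eqnarray*}
a^2 + bd = 0, \quad ac + d^2 = 0, \quad ad - bc = 0
\end{eqnarray*}
hold (the vanishing of the $2\times 2$ minors). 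These are precisely the equations \eqref{eqn:rk1} of Lemma~\ref{lemma:rk1}.

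Next I would translate back to morphisms. A triality-symmetric $\phi$ lying in $U' = \Sym^3 E^* \otimes \exterior^2 E$ is exactly one whose matrix \eqref{eqn:phi-mtx} has $z = 0$, i.e.\ with $\phi_2 = 0$; so its matrix is $A_\phi^t$ where
\begin{eqnarray*}
A_\phi = \left(\begin{array}{cccccc}
a & -d & d & c & 0 & 0 \\
b & a & -a & d & 0 & 0 \end{array}\right).
\end{eqnarray*}
By inspection, $\rk A_\phi \leq 1$ if and only if all $2\times 2$ minors of the left $2\times 4$ block vanish; a short computation shows the full set of minors is generated (up to sign) by $a^2 + bd$, $ac + d^2$, $ad - bc$, together with $c^2 + $ (combinations of the others) — in any case the vanishing locus of all minors coincides with the vanishing locus of \eqref{eqn:rk1}. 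Thus the rank-$\leq 1$ triality-symmetric morphisms inside $U'$ are cut out by \eqref{eqn:rk1}, hence form exactly $\overline{O_3}$; removing the origin (which has rank $0$), the rank-exactly-$1$ ones form $O_3$. To finish I must also check that a triality-symmetric morphism of rank $1$ cannot have $z \neq 0$: if $z\neq 0$ the last two columns of $A_\phi$ are independent, forcing $\rk A_\phi = 2$, so every rank-$1$ triality-symmetric morphism automatically lies in $U'$. This is where Corollary~\ref{cor:rk1} is relevant, and indeed one could alternatively deduce the whole lemma from it: a rank-$\leq 1$ morphism with $\phi_2 = 0$ is triality-symmetric iff its graph is $G_2$-isotropic, and the $G_2$-isotropic rank-$1$ graphs inside $U'$ are, via the flag-bundle dictionary, exactly the points of $\overline{O_3}$.

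The main obstacle is bookkeeping rather than conceptual: one must be careful that ``rank at most $1$'' for the $2\times 6$ matrix $A_\phi$, cut out by \emph{all} its $2\times 2$ minors, gives scheme-theoretically the same ideal as the three displayed quadrics \eqref{eqn:rk1} — a priori there are $\binom{6}{2} = 15$ minors and one should verify no extra conditions appear (they do not, because once the left block has rank $\leq 1$ and $z = 0$ the remaining minors vanish identically). The other point to watch is the identification of ``triple root'' cubics with the twisted-cubic cone: the correspondence between the coordinates $(a,b,c,d)$ with their assigned weights and the monomial coefficients of $-cx^3 - dx^2y + axy^2 + by^3$ must be used consistently, so that the rank-$1$ condition on $\left(\begin{smallmatrix} a & -d & c \\ b & a & d \end{smallmatrix}\right)$ really is the triple-root locus. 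Both are routine once set up carefully, and modulo these checks the lemma follows by matching the three descriptions — orbit, vanishing of \eqref{eqn:rk1}, and rank-$1$ triality-symmetric morphism.
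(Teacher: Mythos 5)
Your proof is correct, and it takes a somewhat different route than the paper's. The paper also first reduces to $z=0$, i.e., to $U'$, but then observes that the Levi subgroup $GL_2$ of $P$ acts by conjugation and hence preserves rank, so it suffices to check a single representative from each of $O_2$ and $O_3$: the monomial $xy^2$ (giving $b=c=d=0$, $a\neq 0$, rank $2$) and $x^3$ (giving $a=b=d=0$, $c\neq 0$, rank $1$). You instead directly compare the ideal of $2\times 2$ minors of the left $2\times 4$ block of $A_\phi$ with the ideal of $2\times 2$ minors of $\left(\begin{smallmatrix} a & -d & c \\ b & a & d \end{smallmatrix}\right)$ defining $\overline{O_3}$, and show they agree. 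Both approaches work; the paper's needs less explicit minor bookkeeping but relies on rank being an orbit invariant, while yours is more computational but self-contained and scheme-theoretically explicit.

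One small correction: in the minor computation for the left $2\times 4$ block $\left(\begin{smallmatrix} a & -d & d & c \\ b & a & -a & d \end{smallmatrix}\right)$, the six $2\times 2$ minors are $a^2+bd$, $-(a^2+bd)$, $ad-bc$, $0$, $-(ac+d^2)$, and $ac+d^2$ --- there is no additional generator involving $c^2$, so the parenthetical hedge ``together with $c^2 + (\cdots)$'' is unnecessary (and not quite right). The conclusion that the vanishing locus matches \eqref{eqn:rk1} is still correct, and the rest of the argument, including the observation that $z\neq 0$ forces rank $2$ and that the origin is rank $0$, is sound.
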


\begin{proof}
First note that any rank $1$ map $\phi$ must correspond to an element $\phi_1\oplus \phi_2 \in U = U'\oplus \exterior^2 E^*$ with $\phi_2 = 0$, i.e., $\phi$ lies in $U'$.  (If $\phi_2 \neq 0$, then $\phi$ surjects onto $E^*$.)

Now the action of $P$ on $U'$ is the same as that of its Levi subgroup $GL_2$.  The inclusion $P\hookrightarrow P_{\hat{2}}\subset GL_8$ induces an inclusion of Levi subgroups $GL_2 \hookrightarrow GL_2 \times GL_6$, and the latter acts by conjugation on $\Hom(E,\End(E)\oplus E^*)$, so it preserves ranks of morphisms.  Therefore it will suffice to check that a representative for $O_2$ has rank $2$, and a representative from $O_3$ has rank $1$.  For these, we use the coordinate description given in \S\ref{sec:triality}.  Under the identification of $U'$ with the space of cubic polynomials, the monomial $x y^2$ corresponds to the basis vector $v_{122}^*$.  The orbit is $O_2$ (since $x y^2$ has two distinct zeroes), and the corresponding matrix $A_\phi$ has $b=c=d=0$ and $a\neq 0$; it is easy to see this means $\phi$ has rank $2$.  Similarly, $x^3$ corresponds to $v_{111}^*$, and the corresponding $A_\phi$ has $a=b=d=0$ and $c\neq 0$, so $\phi$ has rank $1$.
\end{proof}

The formulas of Theorem \ref{thm:triality-formula} now follow from those of Theorem \ref{thm:orbit-formula}.

\begin{proof}[Second proof of Theorem \ref{thm:triality-formula}]
Let $f:X \to BGL_2$ be the map defined (up to homotopy) by the given vector bundle $E$ on $X$.  The corresponding map
\begin{eqnarray*}
f^*: H^*BGL_2 = H_{GL_2}^*(pt) = \Z[c_1,c_2] \to H^*X
\end{eqnarray*}
is given by $c_i \mapsto c_i(E) = (-1)^i c_i(E^*)$.  Equivalently, using the inclusion $H_{GL_2}^*(pt) \subset H_T^*(pt) = \Z[t_1,t_2]$ and Chern roots $x_1,x_2$ for $E^*$, the map is given by $t_i \mapsto - x_i$.

Now using Lemma \ref{lemma:orbit-rank}, we have $f^*[\bar{O}_3] = [D_1(\phi)]$ when $D_1(\phi)$ has expected codimension, and Theorem \ref{thm:triality-formula} follows from Theorem \ref{thm:orbit-formula}.
\end{proof}

\appendix

%%%%%%%%%%%%%%%%%%%%%%%%%%%%%%%%%%%%%%%%%%%%%%%%%%%%%%%%%%%%%%
\section*{Appendix: Octonion bundles}\label{sec:octonions}%%%%
%%%%%%%%%%%%%%%%%%%%%%%%%%%%%%%%%%%%%%%%%%%%%%%%%%%%%%%%%%%%%%

\renewcommand{\thetheorem}{A.\arabic{theorem}}

In this appendix, we establish a $G_2$ analogue of the well-known fact that for any vector bundle $E$, the direct sum $E\oplus E^*$ carries canonical symplectic (type $C$) and symmetric (type $D$) forms; see e.g. \cite[p. 71]{fp}.  This construction does not seem to appear explicitly in the literature, although it is closely related to the Cayley--Dickson doubling construction (\cite{petersson}); see also \cite{mukai}.

We fix some notation.  For any vector bundle $E$, let 
\[
  Tr:End(E)=E^*\otimes E \to \O_X
\]
be the canonical contraction map, and let
\[
  End^0(E) = \ker(Tr) \subset End(E)
\]
be the subbundle of trace-zero endomorphisms.  Let $e:\O_X \to End(E)$ be the identity section.  Thus the composition $Tr\circ e:\O_X \to \O_X$ is multiplication by $\rk(E)$.  Also, the \emph{conjugation} map $End(E) \to End(E)$ is given by $e\circ Tr - id$.  (Here $id$ is the identity morphism, as opposed to the identity section $e$.)  Conjugation is an involution; locally, it is $\xi \mapsto \bar\xi := Tr(\xi)e - \xi$.

The norm on an octonion bundle $C$ corresponds to a nondegenerate symmetric bilinear form $\<\;,\;\>$.  Let $V \subset C$ be the orthogonal complement to the identity subbundle defined by $e$.  A subbundle $E \subset C$ is \emph{$G_2$-isotropic} if it is contained in $V$ and the multiplication map $E\otimes E \to C$ is the zero map.

\begin{proposition} \label{prop:bundle-constr}
Let $E$ be a rank $2$ vector bundle on a variety $X$.  Then $C = E \oplus End(E) \oplus E^*$ has a canonical octonion bundle structure, with identity section $e:\O_X \to End(E) \subset C$.  The subbundle $E=E\oplus 0\oplus 0 \subset C$ is $G_2$-isotropic.
\end{proposition}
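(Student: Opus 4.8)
The plan is to give an explicit formula for the multiplication on $C = E \oplus \End(E) \oplus E^*$ and then verify the octonion axioms directly, exploiting the canonical structures available on a rank $2$ bundle (the trace form on $\End(E)$, the conjugation involution, and, crucially, the isomorphism $\exterior^2 E \isom \O_X$ up to a line bundle twist — more precisely, one should either fix a trivialization of $\det E$ locally or carry the twist by $\exterior^2 E$ along, since a rank $2$ bundle satisfies $E^* \isom E \otimes (\exterior^2 E)^{-1}$). First I would write down the product: for $\xi,\eta\in\End(E)$, use $\xi\cdot\eta$ to be the composition of endomorphisms (or a symmetrized/conjugated version of it), so that $\End(E)$ with the conjugation $\bar\xi = Tr(\xi)e - \xi$ becomes a quaternion-like subalgebra; for $v\in E$ and $f\in E^*$, let $\End(E)$ act on $E$ and on $E^*$ by the tautological (or contragredient) actions, twisted by conjugation as needed; and for the ``off-diagonal'' products $E\otimes E\to C$, $E^*\otimes E^*\to C$, $E\otimes E^*\to C$, use the natural contractions and the identification $E\otimes E\to\exterior^2 E\isom\O_X\hookrightarrow\End(E)$ (with the antisymmetric part landing in the identity line, or in $E^*$ after the twist). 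The cleanest bookkeeping device is to declare $C = V \oplus \O_X e$ with $V = E\oplus\End^0(E)\oplus E^*$ and specify the norm by $\<v\oplus f\>$-type pairings together with $\<\xi,\eta\> = -Tr(\xi\eta) + (\text{trace correction})$, matching the explicit bilinear form \eqref{eqn:v-beta} in the chosen basis \eqref{v-basis}.

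Next I would check the defining properties one at a time: (i) $e$ is a two-sided unit — immediate from the way the $\End(E)$-action is set up; (ii) the norm $N$ is multiplicative, $N(uv)=N(u)N(v)$ — this is the substantive identity, and it is the one I expect to be the main obstacle; (iii) nondegeneracy of $N$ — clear from the explicit pairing. For (ii), rather than expanding all $64$ products of basis vectors, I would reduce to checking the associated bilinear (polarized) identities $\<uv,uw\> = N(u)\<v,w\>$ and then $\<uv,u'w\> + \<u'v,uw\> = 2\<u,u'\>\<v,w\>$ (or equivalently verify that left multiplication $L_u$ satisfies $L_u^* L_u = N(u)\cdot\mathrm{id}$ with respect to $\<\;,\;\>$), and split into the cases according to which of the three summands $E,\End(E),E^*$ the arguments lie in. The $\End(E)$-only case is the classical fact that $(\End(E), \text{conjugation}, \det)$ is a quaternion algebra (here one uses $\rk E = 2$ essentially, so that $\det$ is a quadratic form and $\xi\bar\xi = \det(\xi)e$); the mixed cases reduce to compatibilities between the $\End(E)$-action on $E\oplus E^*$ and the trace/determinant, all of which are standard linear algebra once the signs and the $\exterior^2 E$-twist are pinned down.

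Finally, since everything in the construction is built from canonical maps ($Tr$, $e$, conjugation, the tautological pairing $E\otimes E^*\to\O_X$, and $\exterior^2$), the resulting multiplication $C\otimes C\to C$ and norm $N:C\to\O_X$ are visibly functorial in $E$ and glue over $X$ — so it suffices to verify the axioms fiberwise, i.e., when $X$ is a point and $E$ is a $2$-dimensional vector space, which is exactly the setting of \S\ref{subsec:octonions}. Then the whole verification collapses to a finite check in the basis \eqref{v-basis}: one confirms that the products $v_i v_j$ reproduce the standard octonion multiplication table compatible with \eqref{eqn:v-beta}, and that the identity element is $e = v_4 + v_5$. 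The last sentence of the proposition, that $E = E\oplus 0\oplus 0$ is $G_2$-isotropic, is then immediate: $E\subseteq V$ because $v_1,v_2$ are orthogonal to $e = v_4+v_5$ by \eqref{eqn:v-beta} (their weights $t_1,t_2$ are nonzero, or directly $\<v_1,v_4\>=\<v_1,v_5\>=0$), and the product $E\otimes E\to C$ vanishes because in the multiplication table $v_1 v_1 = v_1 v_2 = v_2 v_1 = v_2 v_2 = 0$ — equivalently, the antisymmetric part lands in $\exterior^2 E$ which we have arranged to map to $0$ under the relevant component (it contributes to the norm pairing $\<v_1,v_2\>$-side, not to the product within $E$), and the symmetric part of $E\otimes E$ maps to $E^*$ via a contraction that is zero on the nose. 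I would present this isotropy check explicitly in coordinates as the cleanest option.
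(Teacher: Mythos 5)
Your overall outline — define an explicit multiplication and norm on $C = E\oplus\End(E)\oplus E^*$ from canonical maps, then verify the composition-algebra axioms — is the same route the paper takes, but the proposal leaves the crucial step (pinning down the multiplication) unspecified, and the hints it does give for that step are partly wrong.

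First, the worry about needing a trivialization of $\exterior^2 E$, or carrying a twist by $\det E$ through the construction, is a red herring. The multiplication and norm in the paper are built entirely from the trace contraction $Tr:\End(E)\to\O_X$, the conjugation $\bar\xi = Tr(\xi)e-\xi$, the evaluation pairings $E^*\otimes E\to\O_X$, the tensor maps $E^*\otimes E\to\End(E)$ and $E\otimes E^*\to\End(E)$, the composition $\End(E)\otimes\End(E)\to\End(E)$, and the module actions of $\End(E)$ on $E$ and $E^*$. The determinant $\det:\End(E)\to\O_X$ exists canonically for any rank $2$ bundle and gives the norm $N(x\oplus\xi\oplus f)=\det(\xi)-f(x)$, with no choice of trivialization of $\det E$ anywhere. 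If your construction needs $\exterior^2 E\isom\O_X$, that is a signal that the formula is not right.

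Second, and more seriously, the off-diagonal components of the multiplication you propose do not match what works, and this is exactly the step you cannot afford to be vague about. In the correct product both $E\otimes E\to C$ and $E^*\otimes E^*\to C$ are the \emph{zero} map; the suggestion to send $E\otimes E\to\exterior^2 E\isom\O_X\hookrightarrow\End(E)$ (or to $E^*$ ``after the twist'') is incompatible both with $N(uv)=N(u)N(v)$ and with the isotropy of $E$ that you need to conclude at the end. The nonzero cross terms are $E\otimes E^*\to\End(E)$ via $(x,g)\mapsto\bar{g\otimes x}$ and $E^*\otimes E\to\End(E)$ via $(f,y)\mapsto f\otimes y$ — note the conjugation on one side but not the other — together with $\End(E)\otimes E\to E$, $E\otimes\End(E)\to E$, $\End(E)\otimes E^*\to E^*$, $E^*\otimes\End(E)\to E^*$ by the obvious module actions twisted by conjugation on one factor. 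Without this specific formula, the statement you explicitly call ``the main obstacle'' — multiplicativity of the norm — is not reducible to ``standard linear algebra'' and is not checked; a wrong choice of cross term will simply fail. The paper's proof carries out that expansion in full; your polarization-and-cases strategy is a fine substitute in principle, but only once the product is fixed. Finally, you do correctly recover at the end that $v_iv_j=0$ for $i,j\in\{1,2\}$, which contradicts your earlier suggestion for $E\otimes E$ — worth noticing that inconsistency, since it would have led you to the right formula.
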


\begin{proof}
We need to define the norm $N:C \to \O_X$ and multiplication $m:C\otimes C \to C$, for $C=E\oplus End(E) \oplus E^*$, and check that they are compatible.

The norm on $C$ corresponds to the bilinear form $\<\;,\;\>$ defined by
\begin{eqnarray*}
\< x\oplus \xi\oplus f,\, y\oplus \eta\oplus g \> = Tr(\xi)Tr(\eta)-Tr(\xi\eta)-f(y)-g(x).
\end{eqnarray*}
(This can also be expressed in terms of natural contraction maps.)  It is clear that $\<\;,\;\>$ is nondegenerate.  Thus
\begin{eqnarray*}
N(x\oplus \xi\oplus f) = \det(\xi) - f(x)
\end{eqnarray*}
is a nondegenerate quadratic norm on $C$.  We also see that $V = E \oplus End^0(E) \oplus E^*$.

The multiplication is given by
\begin{eqnarray*} %\label{eqn:doubling}
(x\oplus \xi\oplus f)\cdot(y\oplus \eta\oplus g) &=& (\eta x + \bar\xi y) \oplus ( \bar{g\otimes x} + \xi \eta + f\otimes y) \oplus (g\xi + f\bar\eta).
\end{eqnarray*}
Noting that $\bar{e}=e$, it is easy to see that $e$ (the identity for $End(E)$) acts as a multiplicative identity for $C$.  Moreover, the multiplication restricts to zero on $E\oplus 0 \oplus 0 \subset C$.

To verify the multiplicativity of the norm, we compute:
\begin{eqnarray*}
N( (x\oplus \xi\oplus f)\cdot(y\oplus \eta\oplus g) ) &=& \det( \bar{g\otimes x} + \xi \eta + f\otimes y ) - ( g\xi + f\bar\eta )(\eta x + \bar\xi y) \\
&=& \det(\xi\eta) + \< \bar{g\otimes x}, \xi\eta \> + \< \bar{g\otimes x}, f\otimes y \>  \\
& & + \< \xi\eta, f\otimes y \> - ( g\xi\eta x + g\xi\bar\xi y + f\bar\eta\eta x + f\bar\eta\bar\xi y) \\
&=& \det(\xi)\det(\eta) + \< g(x)e, \xi\eta \> - \< g\otimes x, \xi\eta \> \\
& & + \< g(x)e, f\otimes y \> - \< g\otimes x, f\otimes y \> +  \< \xi\eta, f\otimes y \> \\
& & - g\xi\eta x - \det(\xi) g(y) - f(x)\det(\eta) - f\bar{\xi\eta} y \\
&=& \det(\xi)\det(\eta) + g(x) Tr(\xi\eta) - g(x) Tr(\xi\eta) + g\xi\eta x \\
& & + g(x) f(y) - g(x) f(y) + g(y) f(x) + Tr(\xi\eta) f(y)  \\
& &  - f\xi\eta y - g\xi\eta x - \det(\xi) g(y) - f(x)\det(\eta) - f\bar{\xi\eta} y \\
&=& \det(\xi)\det(\eta) - \det(\xi) g(y) - f(x)\det(\eta) + f(x) g(y) \\
& & + Tr(\xi\eta) f(y) - f\xi\eta y - f(Tr(\xi\eta) e - \xi\eta)y \\
&=& \det(\xi)\det(\eta) - \det(\xi) g(y) - f(x)\det(\eta) + f(x) g(y) \\
&=& N(x\oplus \xi \oplus f)\, N(y\oplus \eta \oplus g).
\end{eqnarray*}
Thus we have defined an octonion algebra structure on $C$.  

Since the multiplication is zero on $E=E\oplus 0\oplus 0$, it follows that $E\subset V$ is $G_2$-isotropic.
\end{proof}

\raggedbottom%%%%%%%%%%%%%%%%%%%%%%%%%%%%%%%%%%%
%%%%%%%%%%%%%%%%%%%
%%%%%%%%%%%%%%%%%%%%%%%%%%%%%%%%%%%%%%%%

%%%%%%%%%%%%%%%%%%%%%%%%%%%%%%%%%%%%%%%%
\end{document}